\theoremstyle{plain}
\newtheorem{theorem}{Theorem}[section] 
\newtheorem{lem}[theorem]{Lemma}
\newtheorem{prop}[theorem]{Proposition}
\newtheorem{cor}[theorem]{Corollary}
\theoremstyle{remark}
\newtheorem{rem}[theorem]{Remark}
\theoremstyle{definition}
\newtheorem{defn}[theorem]{Definition}
\newcommand{\R}{\mathbb{R}}
\newcommand{\C}{\mathbb{C}}
\newcommand{\Cx}{\mathbb{C}^{\times}}
\newcommand{\Z}{\mathbb{Z}}
\newcommand{\Q}{\mathbb{Q}}
\newcommand{\Xf}{\mathfrak{X}}
\newcommand{\of}{\mathfrak{o}}
\newcommand{\ofx}{\mathfrak{o}^{\times}}
\newcommand{\pf}{\mathfrak{p}}
\newcommand{\As}{\mathscr{A}}
\newcommand{\SG}{\mathscr{SG}}
\newcommand{\Fx}{F^{\times}}
\newcommand{\Dx}{D^{\times}}
\newcommand{\GL}{\operatorname{GL}}
\newcommand{\Hom}{\operatorname{Hom}}
\newcommand{\isom}{\cong}
\newcommand{\tot}{\operatorname{tot}}
\newcommand{\Nrd}{\operatorname{Nrd}}
\newcommand{\Mod}[1]{\ (\operatorname{mod}\ #1)}
\newcommand{\sm}{\smallsetminus}
\newcommand{\abs}[1]{\lvert{#1}\rvert}
\newcommand{\arrup}[2]{\mathrel{\mathop{#1}^{#2}}}
\title{An Explicit Conductor Formula for $\GL_{n}\times\GL_{1}$}
\author{Andrew Corbett}
\address{Mathematisches Institut, Bunsenstr.\ 3-5, 37073 G{\"o}ttingen, Germany}
\email{andrew.corbett@uni-goettingen.de}
\date{30$^{\mathrm{th}}$ January 2019}
\keywords{Non-archimedean representation theory, Epsilon factor}
\subjclass{11S37, 11S40, 11R52}
\begin{document}

\begin{abstract}
We prove an explicit formula for the conductor of an irreducible, admissible representation of $\GL_{n}(F)$ twisted by a character of $\Fx$ where the field $F$ is local and non-archimedean. As a consequence, we quantify the number of character twists of such a representation of fixed conductor.
\end{abstract}

\maketitle



\section{The problem of the twisted conductor}


Let $F$ denote a non-archimedean local field of characteristic zero and let $n\geq 2$. For an irreducible, admissible representation $\pi$ of $\GL_{n}(F)$ and a quasi-character $\chi$ of $\Fx$, we can form the twist $\chi\pi=(\chi\circ\det)\otimes\pi$. Our main result, Theorem \ref{thm:exact-formula}, is an explicit formula for the conductor $a(\chi\pi)$, equal to the Artin conductor, as defined in \S \ref{sec:preliminar-notions}. This formula is given by
\begin{equation}\label{eq:intro-formula}
a(\chi\pi)=a(\pi)+\Delta_{\chi}(\pi) - \delta_{\chi}(\pi)
\end{equation}
where $\Delta_{\chi}(\pi)$ and $\delta_{\chi}(\pi)$ are non-negative integers as defined in Theorem \ref{thm:exact-formula}; they denote a {dominant} and a {non-twist-minimal interference} term, respectively. We give detailed analysis of these terms in \S \ref{sec:leading-interference}, answering questions such as ``for what number of $\chi$ is there interference present?''

As an example, computing $a(\chi\pi)$ in the limit $a(\chi)\rightarrow\infty$ is straightforward: from Proposition \ref{prop:sq-int-formula} and Equation \eqref{eq:decomposition-a-chi-pi} we deduce that
\begin{equation}\label{eq:large-chi}
a(\chi\pi)=na(\chi)
\end{equation}
whenever $a(\chi)>a(\pi)$. In this case $\Delta_{\chi}(\pi)= na(\chi)-a(\pi)$ and $\delta_{\chi}(\pi)=0$. Bushnell--Henniart extend \eqref{eq:large-chi} by proving the upper bound\footnote{Inequality \eqref{eq:bushnell-bound} is a special case of both \cite[Theorem 1]{bushnell-henniart-upper-bound} and our main result, Theorem \ref{thm:exact-formula}. (See also Corollary \ref{cor:upper-and-lower-bounds} for a more precise inequality.)}
\begin{equation}\label{eq:bushnell-bound}
a(\chi\pi)\leq \max\{ a(\pi),\, a(\chi)\} + (n-1)a(\chi),
\end{equation}
surrendering to a weaker bound in the region $0\leq a(\chi)\leq a(\pi)$. Nevertheless, this bound is sharp in that it is attained for \textit{some} $\pi$ and $\chi$, as in \eqref{eq:large-chi} for example.


However, in general such examples become sparse, rendering \eqref{eq:bushnell-bound} as rather coarse as one averages over $\chi$ with $a(\chi)\asymp a(\pi)$. In such cases, evaluating the integers $\Delta_{\chi}(\pi)$ and $\delta_{\chi}(\pi)$ exactly is of crucial importance for numerous problems in analytic number theory. 

In this paper we consider applications to studying $a(\chi\pi)$ in a quantitative fashion. For example, we count the number of $\chi$ for which $a(\chi\pi)$ is equal to a given integer (see \S \ref{sec:analysis}). Such analysis would most commonly be applied when considering $a(\chi\pi)$ on average.

Our formula may be utilised when studying the analytic behaviour of automorphic $L$-functions. 
In particular, it is applicable in conjunction with the following two techniques: taking harmonic $\GL_{1}$-averages and applying the functional equation for $\GL_{n}\times\GL_{1}$-$L$-functions.
For example, conductors of such character twists arise in the work of Nelson--Pitale--Saha \cite{nps} who address the quantum unique ergodicity conjecture for holomorphic cusp forms with ``powerful'' level (see \cite[Remarks 1.9 \& 3.16]{nps}).
The current record for upper and lower bounds for the sup-norm of a Maa{\ss}-newform on $\GL_{2}$ in the level aspect \cite{saha-sup-norm,saha-hybrid,saha-large-values} also depends crucially on the $n=2$ case of Theorem \ref{thm:exact-formula}.

An instance where \eqref{eq:intro-formula} is applied constructively is carried out in \cite{corbett-saha}, once again when $n=2$. Originally, in \cite{brunault}, Brunault computed the value of ramification indices of modular parameterisation maps of various elliptic curves over $\Q$. Whenever the newform attached to $E$ is ``twist minimal'', Brunault could prove that this index was trivial (equal to $1$), holding in particular whenever the conductor of $E$ is square-free. This problem has now been completely solved by Saha and the present author \cite{corbett-saha}. In our solution, it is the degenerate cases of \eqref{eq:intro-formula}, with non-trivial $\Delta_{\chi}(\pi)$ and $\delta_{\chi}(\pi)$, that give rise to the few examples of non-trivial ramification indices.

These results all concern the case $n=2$, where the conductor formula for twists of supercuspidal representations was given by Tunnell \cite[Proposition 3.4]{tunnell} in his thesis (see \cite[Lemma 2.7]{corbett-saha} for the general case). Tunnell himself applied his formula to count isomorphism classes of supercuspidal representations of fixed odd conductor \cite[Theorem 3.9]{tunnell}. He used this observation in his proof of the local Langlands correspondence for $\GL_{2}(F)$ in the majority of cases.

Our present result is suggestive of similar applications: a bound for local Whittaker newforms (and a corresponding global sup-norm bound) in the level aspect; bounds for matrix coefficients of local representations, and estimates relating to the Vorono\u{\i} summation problem for $\GL_{n}$, to name a few.

In \S \ref{sec:intro-formula} we describe how irreducible, admissible representations of $\GL_{n}(F)$ are classified and then go on to give a full account of our main result. This classification assumes the least amount of necessary information in order to give a completely explicit formula. In \S \ref{sec:division} we give a uniform proof of our main result for the quasi-square-integrable representations (see Proposition \ref{prop:sq-int-formula}); these representations are used as building blocks to arrive at the general case. Lastly, in \S \ref{sec:analysis}, we provide a detailed analysis of the terms $\Delta_{\chi}(\pi)$ and $\delta_{\chi}(\pi)$ as found in \eqref{eq:intro-formula}.

\section{An explicit formula for twisted conductors}\label{sec:intro-formula}

Here we give full details of the formula proposed in \eqref{eq:intro-formula}. We first describe the formula for quasi-square-integrable representations of $\GL_{n}(F)$, which is then used to build the result in its full generality.

\subsection{The Langlands classification for \texorpdfstring{$\GL_{n}(F)$}{GL(n,F)}}

Let $\As_{F}(n)$ denote the set of (equivalence classes of) irreducible, admissible representations of $\GL_{n}(F)$. The natural building blocks that describe $\As_{F}(n)$ are the \textit{quasi-square-integrable} representations; these are the $\pi\in\As_{F}(n)$ for which there exists $\alpha\in \R$ such that $\abs{\ \cdot\ }^{\alpha}\pi$ has square-integrable matrix coefficients on $\GL_{n}(F)$ modulo its centre.

The `Langlands classification' (due to Berstein--Zelevinsky in this case) describes the structure of each representation in the graded ring $\As_{F}=\oplus_{n\geq 1} \As_{F}(n)$ in terms of the subset $\SG_{F}$ of quasi-square-integrable representations. By \cite[Theorems 9.3 \& 9.7]{zelevinsky}, one deduces an addition law $\boxplus$ on $\SG_{F}$, by which $\SG_{F}$ generates a free commutative monoid $\Lambda$. The classification is then the assertion that there is a bijection between $\As_{F}$ and the semi-group of non-identity elements in $\Lambda$, thus endowing $\As_{F}$ with the addition law $\boxplus$. Crucially, the maps $(\As_{F}, \boxplus\, )\rightarrow (\C,\ \cdot\ )$, given by applying $L$- or $\varepsilon$-factors, are homomorphisms of semi-groups (see \cite[\S 2.5]{wedhorn} for their definitions). Both expositions \cite{prasad-raghuram,wedhorn} provide excellent background on this topic.


The upshot of this classification is that for any $\pi\in \As_{F}(n)$ there exists a unique partition $n_{1}+\cdots+n_{r}=n$ alongside a collection of quasi-square-integrable representations $\pi_{i}\in \SG_{F}\cap\As_{F}(n_{i})$ for $1\leq i \leq r$ such that
\begin{equation}\label{eq:decomposition-pi}
\pi = \pi_{1}\boxplus \cdots \boxplus \pi_{r}
\end{equation}
and for any quasi-character $\chi$ of $\Fx$ we have
\begin{equation}\label{eq:decomposition-a-chi-pi}
a(\chi\pi) = a(\chi\pi_{1})+ \cdots + a(\chi\pi_{r}).
\end{equation}
Equation \eqref{eq:decomposition-a-chi-pi} follows from the definition of the conductor $a(\pi)$ via the $\varepsilon$-factor in \eqref{eq:epsilon-factor-formula}. Recall too that, for a quasi-character $\chi$ of $\Fx$, the conductor $a(\chi)$ is defined to be the least non-negative integer such that $\chi(\ofx\cap (1+\pf^{a(\chi)}))=\{1\}$ where $\of$ is the ring of integers of $F$ and $\pf\subset \of$ the unique maximal ideal.

\subsection{The formula for quasi-square-integrable representations}

\begin{defn}\label{def:twist-minimal}
An irreducible, admissible representation $\pi$ of $\GL_{n}(F)$ is called \textit{twist minimal} if $a(\pi)$ is the smallest of the integers $a(\chi\pi)$ as $\chi$ varies over the quasi-characters of $\Fx$.
\end{defn}

Recall that for a quasi-character $\chi$ of $\Fx$, define its conductor $a(\chi)$ to be the least non-negative integer such that $\chi(U_{F}(a(\chi)))=\{1\}$.
For quasi-square-integrable representations, the notion of twist-minimality is sufficient to give an exact formula for the conductor of their twist.


\begin{prop}\label{prop:sq-int-formula}
Let $\pi$ be an irreducible, admissible, quasi-square-integrable representation of $\GL_{n}(F)$ and let $\chi$ be a quasi-character of $\Fx$. Then
\begin{equation}\label{eq:sq-int-formula}
a(\chi\pi)\leq \max\{a(\pi),\,na(\chi)\}
\end{equation}
with equality in \eqref{eq:sq-int-formula} whenever $\pi$ is twist minimal or $a(\pi)\neq n a(\chi)$.
\end{prop}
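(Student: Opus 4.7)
The plan is to use the Bernstein--Zelevinsky structure of the discrete series to reduce to the supercuspidal case. Every $\pi\in\SG_F\cap\As_F(n)$ is a generalised Steinberg $\pi = \St_m(\rho)$ for a unique divisor $d=n/m$ and a supercuspidal $\rho\in\As_F(d)$, and the formation of $\St_m$ commutes with character twists, so $\chi\pi = \St_m(\chi\rho)$. The Artin conductor of such a generalised Steinberg decouples according to ramification,
\[
a(\St_m(\tau)) = \begin{cases} m\cdot a(\tau) & \text{if } \tau \text{ is ramified},\\ d(m-1) & \text{if } \tau \text{ is unramified},\end{cases}
\]
which one reads off from the Weil--Deligne parameter $\tau\otimes\St_m$ by separating the contributions of inertia invariants and of the monodromy operator. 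This reduces the proposition to the corresponding supercuspidal assertion, together with a short case split over the ramification statuses of $\rho$ and $\chi\rho$.

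For the supercuspidal statement $a(\chi\rho)\leq \max\{a(\rho),\,d\cdot a(\chi)\}$, with equality whenever $\rho$ is twist minimal or $a(\rho)\neq d\cdot a(\chi)$, I would pass through the local Langlands correspondence to the associated irreducible $d$-dimensional Weil representation $\sigma$. Irreducibility forces $\sigma$ to possess a single ramification slope $\lambda$, while a ramified $\chi$ has slope $a(\chi)-1$. If these two slopes differ then $\chi\sigma$ has a single slope equal to the larger one, yielding the equality $a(\chi\rho)=\max\{a(\rho),\,d\cdot a(\chi)\}$; if they agree, the slope of $\chi\sigma$ is at most this common value, with equality unless genuine cancellation occurs in the tensor product. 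Twist-minimality of $\rho$ excludes such cancellation, since it would yield a twist of strictly smaller conductor than $\rho$, violating the definition.

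The inequality for $\pi$ then follows by multiplying the supercuspidal bound by $m$ and accounting for the Steinberg correction $d(m-1)$. The subcase in which both $\rho$ and $\chi\rho$ are ramified is immediate, and the subcase in which they are both unramified forces $\chi$ unramified, so $a(\chi)=0$ and both sides equal $d(m-1)$. The remaining subcase, where $\rho$ is ramified but $\chi\rho$ becomes unramified, forces the identity $a(\rho)=d\cdot a(\chi)$ via twist-minimality applied to the unramified supercuspidal $\chi\rho$, and one then verifies $d(m-1)\leq m\cdot a(\rho)$ directly using $a(\chi)\geq 1$, which also certifies that $\pi$ itself is not twist minimal in this configuration --- consistent with the strict inequality predicted by the proposition. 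The main obstacle is the ramification-theoretic bookkeeping of the supercuspidal step: pinpointing exactly when slope cancellation in $\chi\sigma$ can occur and verifying that twist-minimality rules it out. Once that is settled, the Steinberg reduction is essentially mechanical.
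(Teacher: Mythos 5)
Your proposal is correct, but it proceeds along a genuinely different route from the paper. You reduce to supercuspidals via the Bernstein--Zelevinsky classification $\pi=\St_m(\rho)$, pass through the local Langlands correspondence to an irreducible $d$-dimensional Weil representation $\sigma$, and argue with the ramification filtration: irreducibility forces a single break (because the upper-numbered inertia subgroups are normal in $W_F$, so any nontrivial fixed subspace would be a proper subrepresentation), tensoring with a character of different slope is slope-preserving, and cancellation at equal slope is precisely what twist-minimality forbids; the Steinberg bookkeeping $a(\St_m(\tau))=m\,a(\tau)$ (ramified) or $d(m-1)$ (unramified, which for a supercuspidal parameter forces $d=1$) then finishes the job. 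The paper instead works entirely on the automorphic side: it first reduces to unitary (genuinely square-integrable) $\pi$ by the trivial invariance of the conductor under unramified twist, transfers by the Jacquet--Langlands correspondence to an irreducible representation $\pi'$ of the division algebra $D^\times$, and uses the Koch--Zink relation $a(\pi')=l(\pi')+n-1$ between conductor and level together with $l(\chi\circ\Nrd)=na(\chi)-n+1$; the inequality $l(\chi\pi')\le\max\{l(\pi'),\,l(\chi\circ\Nrd)\}$ and its equality cases are then immediate from the definition of the level. Your approach buys a direct and conceptually transparent connection to Galois-theoretic slopes, plus the Steinberg conductor formula as a reusable byproduct, but it needs the full local Langlands correspondence for $\GL(n)$ and the (standard but unreferenced in your sketch) facts that irreducible Weil representations are isoclinic and that tensor products respect the slope decomposition. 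The paper's route is more uniform --- there is no supercuspidal/Steinberg case split since all of $\SG_F$ is handled at once --- and more elementary in its inputs: it uses only the Jacquet--Langlands transfer and finite-group level considerations on $D^\times$, never invoking Galois representations. Both arguments are sound; yours simply relocates the difficulty from the level computation on $D^\times$ to the ramification bookkeeping on the Galois side.
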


We defer our proof of Proposition \ref{prop:sq-int-formula} until \S \ref{sec:main-proofs}.

\begin{rem}\label{rem:minimal}
In practice, one handles those $\pi\in\SG_{F}\cap\As_{F}(n)$ which are \textit{not} twist minimal as follows. Tautologically, write $\pi=\mu \pi^{\min}$ where $\mu$ is a quasi-character of $\Fx$ and $\pi^{\min}$ \textit{is} twist minimal. Then Proposition \ref{prop:sq-int-formula} implies that $a(\chi\pi)=\max\{a(\pi^{\min}),\,na(\chi\mu)\}$.
In particular, if $a(\pi^{\min})<a(\pi)$ then $n\mid a(\pi)$.
\end{rem}

Let us briefly mention the conductor formula of Bushnell--Henniart--Kutzko \cite[Theorem 6.5]{bushnell-henniart-kutzko} for $\GL_{n}\times\GL_{m}$-pairs of \textit{supercuspidal} representations. There they deploy the full structure theory of supercuspidal representations to prove a detailed identity relating the conductor to the respective inducing data of the given supercuspidal representations. However, this formula is difficult to apply in practice. Indeed, our own Proposition \ref{prop:sq-int-formula} may be derived from their work. Comparing the $m=1$ case of \cite{bushnell-henniart-kutzko} to our present result, our formula is simpler and holds uniformly on the larger set $\SG_{F}$. This set contains not only the supercuspidal representations but also, for example, the special representations, for which Proposition \ref{prop:sq-int-formula} recovers the known formula of Rohrlich \cite[p.~18]{rohrlich}. Accordingly, we give an elementary proof of Proposition \ref{prop:sq-int-formula}. This promotes our observation that the subset of twist minimal elements in $\SG_{F}$ contains sufficient and necessary information to explicitly determine the conductor of any twist.

The arguments of \S \ref{sec:main-proofs} also lead to a proof of the following result on the central character.

\begin{prop}\label{prop:cen-char}
Let $\pi$ be an irreducible, admissible, quasi-square-integrable representation of $\GL_{n}(F)$ with central character $\omega_{\pi}$. Then

\begin{equation}\label{eq:cen-char-formula}
a(\omega_{\pi})\,\leq\,  \frac{a(\pi)}{n}.
\end{equation}
\end{prop}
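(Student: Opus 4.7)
The plan is to combine the a priori bound $a(\omega_\sigma)\le a(\sigma)$, valid for any irreducible admissible $\sigma$, with a reduction to the twist-minimal case via Remark \ref{rem:minimal}, and then to handle the twist-minimal case by paralleling the arguments used in proving Proposition \ref{prop:sq-int-formula}. First I would verify $a(\omega_\sigma)\le a(\sigma)$ for any $\sigma\in\As_F(m)$: a newvector of $\sigma$ is fixed by the congruence subgroup $K_1(\pf^{a(\sigma)})$, which contains the scalar subgroup $\{zI:z\in 1+\pf^{a(\sigma)}\}$; this forces $\omega_\sigma$ to be trivial on $1+\pf^{a(\sigma)}$.

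If $\pi$ is not twist-minimal, I would write $\pi=\mu\pi^{\min}$ using Remark \ref{rem:minimal}, so that $\omega_\pi=\mu^{n}\omega_{\pi^{\min}}$ and $a(\pi)=\max\{a(\pi^{\min}),\,na(\mu)\}$. Combining the elementary inequality $a(\mu^{n})\le a(\mu)$ with the twist-minimal version of the desired bound then yields
$$a(\omega_\pi)\,\le\,\max\{a(\mu^{n}),\,a(\omega_{\pi^{\min}})\}\,\le\,\max\{a(\mu),\,a(\pi^{\min})/n\}\,=\,a(\pi)/n,$$
reducing matters to the case where $\pi$ is twist-minimal.

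For that remaining case I would mirror the arguments used in proving Proposition \ref{prop:sq-int-formula}. Via the local Langlands correspondence, a twist-minimal quasi-square-integrable $\pi$ corresponds to an indecomposable Weil--Deligne parameter $\sigma\otimes\St(r)$ with $\sigma$ irreducible of dimension $d=n/r$, and $\omega_\pi$ corresponds to $(\det\sigma)^{r}$ (the determinant of $\St(r)$ being unramified). This reduces the problem to showing $a(\det\sigma)\le a(\sigma)/d$ for any irreducible Weil representation $\sigma$ of dimension $d$; writing $\sigma\cong\Ind_{W_E}^{W_F}\xi$ with $[E:F]=d$, this follows from the classical conductor formula $a(\sigma)=f(E/F)\cdot a_E(\xi)+v_F(\mathfrak{d}_{E/F})$ together with the class-field-theoretic identity $\det\Ind_{W_E}^{W_F}\xi=(\xi\vert_{F^\times})\cdot\delta_{E/F}$. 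The principal obstacle is handling wildly ramified $E/F$, where the conductor of $\xi\vert_{F^\times}$ is $\lceil a_E(\xi)/e(E/F)\rceil$ and the discriminant character $\delta_{E/F}$ may have conductor as large as $v_F(\mathfrak{d}_{E/F})$; careful use of the lower bound $v_F(\mathfrak{d}_{E/F})\ge d-1$ (refined in the wild case) is needed to close the argument. Given the paper's elementary style, a more likely route is to avoid the Galois side entirely and work directly with a Bushnell--Kutzko type or Kirillov model of $\pi$, reading $\omega_\pi$ off the inducing data.
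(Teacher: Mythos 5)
Your opening reductions are valid: the elementary bound $a(\omega_\sigma)\le a(\sigma)$, the identity $\omega_\pi=\mu^n\omega_{\pi^{\min}}$, the inequality $a(\mu^n)\le a(\mu)$, and the consequence $a(\pi)=\max\{a(\pi^{\min}),\,na(\mu)\}$ of Remark~\ref{rem:minimal} do combine exactly as you write, so \eqref{eq:cen-char-formula} reduces cleanly to the twist-minimal case. However, the proposal never closes that case, and this is where the real content of the proposition lies. Your Galois-side route carries several nontrivial commitments that you do not discharge: you need that every irreducible Weil representation is monomial, the conductor--discriminant formula, the determinant-of-induction identity, and (as you flag yourself) a delicate comparison of $a(\xi\vert_{\Fx})$ and $v_F(\mathfrak{d}_{E/F})$ in the wildly ramified case which you do not carry out. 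The closing hedge toward ``a Bushnell--Kutzko type or Kirillov model'' is a guess, not an argument, and also does not match the paper. So as it stands there is a genuine gap in the twist-minimal case.

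The paper's proof is shorter and sidesteps both the twist-minimal reduction and the Galois side entirely. After reducing to square-integrable $\pi$ (the unramified-twist lemma used for Proposition~\ref{prop:sq-int-formula}), one transfers via Jacquet--Langlands to an irreducible representation $\pi'$ of $\Dx$ with the same central character and conductor. Since $\pi'$ is trivial on $U_D(l(\pi'))$, the central character $\omega_\pi=\pi'\vert_{\Fx}$ is trivial on $U_D(l(\pi'))\cap\Fx=U_F(\lceil l(\pi')/n\rceil)$ by Lemma~\ref{lem:div-intersection-of-units}. Lemma~\ref{lem:div-level-koch} then gives $l(\pi')=a(\pi)-n+1$, whence
\begin{equation*}
a(\omega_\pi)\;\le\;\Big\lceil\tfrac{a(\pi)-n+1}{n}\Big\rceil\;<\;\tfrac{a(\pi)+1}{n},
\end{equation*}
and integrality of $a(\omega_\pi)$ yields $na(\omega_\pi)\le a(\pi)$. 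You should either complete your twist-minimal case rigorously or adopt the division-algebra argument, which makes the twist-minimal reduction unnecessary.
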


\begin{rem}
The central character of a quasi-square-integrable representation has relatively small conductor.  In general, highly ramified central characters arise due to the components in a given $\pi_{1}\boxplus\cdots\boxplus\pi_{r}$ for $r\geq 2$. For this reason, such representations should be handled separately, as is distinguished in this work.
\end{rem}

\subsection{The general formula}

We arrive at our main result, having defined the necessary set of properties of the representations in $\As_{F}$ in order to give a complete and explicit formula for the conductor of their twists.

\begin{theorem}\label{thm:exact-formula}

Let $\pi$ be an irreducible, admissible representation of $\GL_{n}(F)$ given in terms of quasi-square-integrable representations $\pi_{i}$ of $\GL_{n_{i}}(F)$, as described in \eqref{eq:decomposition-pi}, where $n=n_{1}+\cdots+n_{r}$ and $\pi=\pi_{1}\boxplus\cdots\boxplus\pi_{r}$. Let $\chi$ be a quasi-character of $\Fx$. Then
\begin{equation*}
a(\chi\pi)=a(\pi)+\Delta_{\chi}(\pi) - \delta_{\chi}(\pi)
\end{equation*}
where $\Delta_{\chi}$ and $\delta_{\chi}$ are semi-group homomorphisms $(\As_{F}, \boxplus\, )\rightarrow (\Z_{\geq 0}, + )$ defined by their values on the representations $\pi_{i}\in \SG_{F}$ as follows:
\begin{equation*}
\Delta_{\chi}(\pi_{i})=\left\lbrace\begin{array}{cl}\vspace{0.05in}
\max\{ n_{i}a(\chi)-a(\pi_{i}),\,0\} & \text{if } a(\chi)\neq a(\mu_{i})\\
0&\text{if } a(\chi)= a(\mu_{i})
\end{array}\right.
\end{equation*}
and
\begin{equation*}
\delta_{\chi}(\pi_{i})=\left\lbrace\begin{array}{cl}\vspace{0.05in}
 a(\pi_{i})-\max\{a(\pi^{\min}_{i}),\,n_{i}a(\chi\mu_{i})\} & \text{if } a(\chi)= a(\mu_{i})\\\vspace{0.05in}
0&\text{if } a(\chi)\neq a(\mu_{i})
\end{array}\right.
\end{equation*}
where $\pi^{\min}_{i}$ is twist minimal and $\mu_{i}$ a quasi-character of $\Fx$ such that we may write $\pi_{i}=\mu_{i}\pi^{\min}_{i}$.
\end{theorem}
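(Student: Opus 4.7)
The plan is to reduce the general identity to the quasi-square-integrable case, for which Proposition \ref{prop:sq-int-formula} (equivalently, its rephrasing in Remark \ref{rem:minimal}) does all the work. Since $\Delta_{\chi}$ and $\delta_{\chi}$ are defined as semi-group homomorphisms on $(\As_{F}, \boxplus\,)$, and since $a(\chi \cdot)$ is already such a homomorphism by \eqref{eq:decomposition-a-chi-pi}, it suffices to verify the identity
$$a(\chi\pi_{i}) = a(\pi_{i}) + \Delta_{\chi}(\pi_{i}) - \delta_{\chi}(\pi_{i})$$
on each quasi-square-integrable factor $\pi_{i}\in\SG_{F}\cap\As_{F}(n_{i})$, along with the non-negativity of both $\Delta_{\chi}(\pi_{i})$ and $\delta_{\chi}(\pi_{i})$.

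Fix such a $\pi_{i}$ with decomposition $\pi_{i}=\mu_{i}\pi_{i}^{\min}$. Applying Remark \ref{rem:minimal} first to $\chi\pi_{i}$ and then to $\pi_{i}$ (via the trivial character) yields
$$a(\chi\pi_{i}) = \max\{a(\pi_{i}^{\min}),\, n_{i}a(\chi\mu_{i})\}, \qquad a(\pi_{i}) = \max\{a(\pi_{i}^{\min}),\, n_{i}a(\mu_{i})\}.$$
The argument then splits according to whether $a(\chi)=a(\mu_{i})$. When $a(\chi)\neq a(\mu_{i})$, the standard conductor identity $a(\chi\mu_{i})=\max\{a(\chi),a(\mu_{i})\}$ (no cancellation is possible at distinct levels) collapses the first maximum to $\max\{a(\pi_{i}),\,n_{i}a(\chi)\}$. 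Subtracting $a(\pi_{i})$ gives precisely $\Delta_{\chi}(\pi_{i})=\max\{n_{i}a(\chi)-a(\pi_{i}),\,0\}\geq 0$, with $\delta_{\chi}(\pi_{i})=0$ by definition. When $a(\chi)=a(\mu_{i})$, the definition of $\delta_{\chi}(\pi_{i})$ is tautologically $a(\pi_{i})-a(\chi\pi_{i})$, while $\Delta_{\chi}(\pi_{i})=0$. Non-negativity of $\delta_{\chi}(\pi_{i})$ here follows from the subadditivity estimate $a(\chi\mu_{i})\leq a(\chi)=a(\mu_{i})$, which makes the max defining $a(\chi\pi_{i})$ no larger than the max defining $a(\pi_{i})$.

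The bulk of the analytic content is absorbed into Proposition \ref{prop:sq-int-formula}; the present theorem is then essentially combinatorial bookkeeping on top of the decomposition \eqref{eq:decomposition-a-chi-pi}. The one delicate point, and where I expect the proof's novelty to lie rather than any obstacle, is the precise case separation at $a(\chi)=a(\mu_{i})$: this is exactly the level at which the character $\chi\mu_{i}$ may ramify less than either factor, producing the cancellation that the \emph{non-twist-minimal interference} term $\delta_{\chi}$ records. Identifying this as the unique mechanism by which $a(\chi\pi)$ can drop below $a(\pi)$ is what allows the formula to be explicit rather than merely an inequality.
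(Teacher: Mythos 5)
Your proposal is correct and follows essentially the same route as the paper: reduce to each quasi-square-integrable factor via \eqref{eq:decomposition-a-chi-pi}, apply Proposition~\ref{prop:sq-int-formula} in the form of Remark~\ref{rem:minimal} to obtain $a(\chi\pi_{i})=\max\{a(\pi_{i}^{\min}),\,n_{i}a(\chi\mu_{i})\}$, and split on whether $a(\chi)=a(\mu_{i})$, using $a(\chi\mu_{i})=\max\{a(\chi),a(\mu_{i})\}$ at distinct levels to identify $\Delta_{\chi}(\pi_{i})$ and the subadditivity $a(\chi\mu_{i})\leq a(\mu_{i})$ at equal levels to verify $\delta_{\chi}(\pi_{i})\geq 0$. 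If anything, your write-up is more explicit than the paper's terse proof (whose final sentence appears to carry a typo, stating ``$a(\chi)\neq a(\mu_{i})$'' where ``$a(\chi)=a(\mu_{i})$'' is clearly intended for the non-negativity of $\delta_{\chi}$), but the mathematical content is identical.
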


\begin{rem}\label{rem:non-negative}
As exhibited in the following proof, both terms $\Delta_{\chi}(\pi)$ and $\delta_{\chi}(\pi)$ are non-negative for any choice of $\pi$ and $\chi$.
\end{rem}

\begin{proof}

Applying Proposition \ref{prop:sq-int-formula} to the formula in \eqref{eq:decomposition-a-chi-pi} we obtain
\begin{equation}\label{eq:proof-key}
a(\chi\pi)=\sum_{i=1}^{r} \max\{a(\pi^{\min}_{i}),\,n_{i}a(\chi\mu_{i})\}.
\end{equation}
We now use the basic fact that for two quasi-characters, $\mu$ and $\chi$, of $\Fx$ we have
\begin{equation}\label{eq:char-inequality}
a(\chi\mu) \leq \max\{a(\chi),a(\mu)\}
\end{equation}
with equality in \eqref{eq:char-inequality} whenever $a(\chi)\neq a(\mu)$. In particular, if $a(\chi)\neq a(\mu_{i})$ for a given $1\leq i\leq r$ then, by Proposition \ref{prop:sq-int-formula} and \eqref{eq:char-inequality}, the respective summand in \eqref{eq:proof-key} is equal to $\max\{a(\pi^{\min}_{i}),\,n_{i}a(\chi\mu_{i})\}=\max\{a(\pi_{i}),\,n_{i}a(\chi)\}$.
This determines the dominant term $\Delta_{\chi}(\pi_i)$, which is non-negative by construction. The interference term $\delta_{\chi}(\pi_i)$ describes the cases for which $a(\chi)= a(\mu_{i})$, when the assertion that $\delta_{\chi}(\pi_i)\geq 0$ follows from the inequality $ a(\pi_{i})\geq\max\{a(\pi^{\min}_{i}),\,n_{i}a(\chi\mu_{i})\}$.
\end{proof}

\begin{rem}
In the special case $n=2$, we prove Theorem \ref{thm:exact-formula} in \cite[Lemma 2.7]{corbett-saha}. In general, one should understand the non-vanishing of $\delta_{\chi}(\pi)$ as occurring rarely, whereas $\Delta_{\chi}(\pi)$ describes the dominant or ``usual'' behaviour of $a(\chi\pi)$. We make these statements explicit in a quantitative sense in \S \ref{sec:leading-interference}.
\end{rem}

\begin{cor}\label{cor:upper-and-lower-bounds}

Let $\pi=\pi_{1}\boxplus\cdots\boxplus\pi_{r}$ and $\chi$ be as in Theorem \ref{thm:exact-formula} with $\pi_{i}=\mu_{i}\pi^{\min}_{i}$ for twist minimal representations $\pi_{i}^{\min}$. Define the `totally minimal' representation $\pi^{\tot}=\pi^{\min}_{1}\boxplus \cdots \boxplus \pi^{\min}_{r}$ and let $\Omega_{\chi}(\pi)=\{ 1\leq i \leq r \, : \, a(\pi_{i})> n_{i}a(\chi)\}$. Then
\begin{equation}\label{eq:cor-bounds}
a(\pi^{\tot})\leq a(\chi\pi)\leq a(\pi) +a(\chi)\bigg(n-\sum_{i\in \Omega_{\chi}(\pi)}n_{i}\bigg).
\end{equation}

\end{cor}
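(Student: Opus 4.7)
The plan is to extract both bounds directly from Theorem \ref{thm:exact-formula} and Equation \eqref{eq:proof-key} established in its proof. Since $\Delta_{\chi}(\pi)$ and $\delta_{\chi}(\pi)$ are non-negative, the non-trivial work lies in carefully controlling the term $\Delta_{\chi}(\pi)$ componentwise and in identifying the lower bound with $a(\pi^{\min})$.

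For the lower bound, I would invoke the identity
\begin{equation*}
a(\chi\pi)=\sum_{i=1}^{r} \max\{a(\pi^{\min}_{i}),\,n_{i}a(\chi\mu_{i})\}
\end{equation*}
from \eqref{eq:proof-key}. Dropping the $n_{i}a(\chi\mu_{i})$ argument in each maximum immediately yields $a(\chi\pi)\geq \sum_{i=1}^{r} a(\pi^{\min}_{i})$, which by the definition of $\pi^{\min}$ and the additivity in \eqref{eq:decomposition-a-chi-pi} (applied to the trivial twist of $\pi^{\min}$) equals $a(\pi^{\min})$. This step is essentially formal.

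For the upper bound, I would start from $a(\chi\pi)\leq a(\pi)+\Delta_{\chi}(\pi)$, using $\delta_{\chi}(\pi)\geq 0$, and then analyse $\Delta_{\chi}(\pi)=\sum_{i=1}^{r}\Delta_{\chi}(\pi_{i})$ index by index. By the definition of $\Omega_{\chi}(\pi)$, for $i\in\Omega_{\chi}(\pi)$ we have $n_{i}a(\chi)<a(\pi_{i})$, forcing $\Delta_{\chi}(\pi_{i})=0$ regardless of whether $a(\chi)=a(\mu_{i})$. For $i\notin\Omega_{\chi}(\pi)$, either $\Delta_{\chi}(\pi_{i})=0$ (when $a(\chi)=a(\mu_{i})$) or $\Delta_{\chi}(\pi_{i})=n_{i}a(\chi)-a(\pi_{i})\leq n_{i}a(\chi)$. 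Summing over $i\notin\Omega_{\chi}(\pi)$ gives $\Delta_{\chi}(\pi)\leq a(\chi)\bigl(n-\sum_{i\in\Omega_{\chi}(\pi)}n_{i}\bigr)$, which yields the stated bound.

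There is no real obstacle: the argument is a bookkeeping exercise that repackages Theorem \ref{thm:exact-formula}. The only subtle point to verify is that the case $a(\chi)=a(\mu_{i})$ with $i\notin\Omega_{\chi}(\pi)$ does not spoil the bound, but this is immediate since $\Delta_{\chi}(\pi_{i})=0$ there is strictly less than the contribution $n_{i}a(\chi)$ we allow for that index. Accordingly I would present the proof as two short paragraphs: one deriving the lower bound from \eqref{eq:proof-key}, and one splitting the sum defining $\Delta_{\chi}(\pi)$ along $\Omega_{\chi}(\pi)$ and its complement to obtain the upper bound.
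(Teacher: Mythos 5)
Your argument is correct and follows essentially the same route as the paper: the lower bound drops $n_i a(\chi\mu_i)$ from each maximum in Equation \eqref{eq:proof-key}, and the upper bound uses $\delta_\chi(\pi)\geq 0$ together with the componentwise vanishing of $\Delta_\chi(\pi_i)$ on $\Omega_\chi(\pi)$ and the crude bound $\Delta_\chi(\pi_i)\leq n_i a(\chi)$ off it. If anything your write-up is slightly cleaner than the paper's (whose phrase about bounding $\Delta_\chi(\pi_i)$ and $\delta_\chi(\pi_i)$ ``by $a(\pi_i)+n_i a(\chi)$'' is ambiguous), but the content and decomposition are identical.
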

\begin{proof}

The lower bound of \eqref{eq:cor-bounds} follows immediately from \eqref{eq:decomposition-a-chi-pi} and \eqref{eq:proof-key}. On the other hand, for $i\in \Omega_{\chi}(\pi)$ we have $\Delta_{\chi}(\pi_{i})=\delta_{\chi}(\pi_{i})=0$ by definition, noting that $\pi_i=\pi_{i}^{\min}$ in the case $a(\chi)=a(\mu_i)$. The upper bound now follows from using Proposition \ref{prop:sq-int-formula} to coarsely bound $a(\chi\pi_i)\leq a(\pi_{i})+n_{i}a(\chi)$ for $i\not\in \Omega_{\chi}(\pi)$.

\end{proof}

\begin{proof}[Proof of Inequality \eqref{eq:bushnell-bound}]
We recover Bushnell--Henniart's bound \eqref{eq:bushnell-bound} using Corollary \ref{cor:upper-and-lower-bounds}. If $a(\chi)>a(\pi)$ then $a(\chi\pi)=na(\chi)$ by \eqref{eq:proof-key}. On the other hand, if $a(\chi)\leq a(\pi)$ then \eqref{eq:bushnell-bound} is a special case of \eqref{eq:cor-bounds} since we have $\Omega_{\chi}(\pi)\neq \emptyset$ and each $n_{i}\geq 1$.
\end{proof}

\section{Conductors of twists via division algebras}\label{sec:division}

In this section we provide proofs for Propositions \ref{prop:sq-int-formula} and \ref{prop:cen-char}. These results apply to all quasi-square-integrable representations uniformly, as is reflected in our proof. In particular, our conductor formula bypasses many of the complications occuring in the formula for supercuspidal representations given in \cite{bushnell-henniart-kutzko}.

\subsection{Notation and definition of the conductor}\label{sec:preliminar-notions}

Let $\pi$ denote an irreducible, admissible representation of $\GL_{n}(F)$. Denote by $\tilde{\pi}$ be the contragredient representation and $\omega_{\pi}$ the central character of $\pi$, respectively.

\subsubsection{The non-archimedean local field}

We denote by $\of$ the ring of integers of $F$; $\pf$ the maximal ideal of $\of$; $\varpi$ a choice of uniformising parameter, that is a generator of $\pf$; and $q=\# (\of/\pf)$. Let $\abs{x}$ denote the absolute value of $x\in F$, normalised so that $\abs{\varpi}=q^{-1}$, and $v_{F}$ the valuation on $F$ defined via $\abs{x} = q^{-v_{F}(x)}$. We define a basis of open neighbourhoods $U_{F}(m)$ of $1$ in $U_{F}(0)=\ofx$ by $U_{F}(m)=1+\varpi^{m}\of$ for $m>0$. Let $K=\GL_{n}(\of)$ and for each $m\geq 0$ let $K_{1}(m)$ be the subgroup of $K$ stabilising the row vector $(0,\ldots,0,1)$, from the right, modulo $\pf^{m}$.

\subsubsection{The floor and ceiling functions}

For $\alpha\in\R$ let $\lfloor \alpha\rfloor$ denote the \textit{floor} of $\alpha$, defined via $\lfloor \alpha\rfloor=m$ if and only if $m\in\Z$ and $m\leq \alpha < m+1$. Similarly, let $\lceil \alpha\rceil$ denote the \textit{ceiling} of $\alpha$, defined via $\lceil \alpha\rceil=m'$ if and only if $m'\in\Z$ and $m'-1< \alpha\leq m'$. Then $\lfloor \alpha\rfloor=\lceil \alpha\rceil$ if and only if $\alpha\in\Z$.

\subsubsection{Epsilon constants and the conductor}\label{sec:conductor-artin}

Here we define the integer $a(\pi)$, the conductor of $\pi$. 
Let $\psi$ be an additive character of $F$ and define the exponent of $\psi$ by $n(\psi):=\min \{m : \psi\vert_{\pf^{m}}=1\}.$ Godement--Jacquet prove the existence of $\varepsilon$-factors $\varepsilon(s,\pi,\psi)\in \C[q^{-s},q^{s}]$ in \cite[Theorem 3.3, (4)]{godement-jacquet}. Applying the local functional equation of Godement--Jacquet twice, one obtains
\begin{equation}
\varepsilon(s,\pi,\psi)\varepsilon(1-s,\tilde{\pi},\psi)=\omega_{\pi}(-1).
\end{equation}
Hence $\varepsilon(s,\pi,\psi)$ is a unit in $\C[q^{-s},q^{s}]$; that is, a $\Cx$-constant multiple of an integral power of $q^{-s}$. Explicitly, using \cite[(3.3.5)]{godement-jacquet} one deduces
\begin{equation}\label{eq:epsilon-factor-formula}
\varepsilon(s,\pi,\psi)=\varepsilon(1/2,\pi,\psi)\,q^{(a(\pi)-n(\psi)n)(\frac{1}{2}-s)},
\end{equation}
in which the conductor $a(\pi)$ is implicitly defined. By the local Langlands correspondence for $\GL_{n}(F)$, proved in \cite{harris-taylor-langlands}, the conductor $a(\pi)$ coincides with the Artin conductor of an $n$-dimensional Weil--Deligne representation. A fundamental property of $\varepsilon$-factors is that $\varepsilon(s,\chi\pi,\psi)=\prod_{i=1}^{r}\varepsilon(s,\chi\pi_{i},\psi)$ for $\pi=\pi_{1}\boxplus\cdots\boxplus\pi_{r}$, as in \eqref{eq:decomposition-pi} (see \cite[Theorem 3.4]{godement-jacquet}). This observation proves \eqref{eq:decomposition-a-chi-pi} by applying \eqref{eq:epsilon-factor-formula}. Moreover, if $\pi$ is generic, the conductor $a(\pi)$ may be interpreted in terms of newform theory as we now explain.

\subsubsection{Conductors of generic representations and newform theory}\label{sec:conductor-newform}

Each representation in $\SG_{F}$ is \textit{generic}. Indeed, by showing so for the regular representation of $\GL_{n}(F)$ of fixed central character, Jacquet shows that all discrete series representations are generic \cite[Theorem 2.1, (3)]{jacquet-generic}. By the Langlands classification, any $\pi\in\As_{F}(n)$ is generic (or ``non-degenerate'') if and only if $\pi$ is equivalent to the (irreducible) representation parabolically induced from the external tensor product $\pi_{1}\boxtimes\cdots\boxtimes\pi_{r}$ of $\GL_{n_1}(F)\times\cdots\times\GL_{n_r}(F)$ associated to $n_{1}+\cdots+n_{r}$ (by \cite[Theorem 9.7, (a)]{zelevinsky}). The elements of $\SG_{F}$ correspond to those irreducible representations with $r=1$.

Assume that $\pi\in\As_{F}(n)$ is generic. Then the conductor $a(\pi)$ may be equivalently constructed in a language more familiar to the theory of automorphic forms: let us re-define the conductor $a(\pi)$ of $\pi$ to be the least non-negative integer $m$ such that $\pi$ contains a non-zero $K_{1}(m)$-fixed vector.

The fundamental theorem of newform theory is that the space of $K_{1}(a(\pi))$-fixed vectors is one-dimensional. This theorem is due to Gelfand--Ka\v{z}dan \cite{gelfand-kazdan} in the present context. The coincidence of the definitions for $a(\pi)$ given in \S \ref{sec:conductor-artin} and \S \ref{sec:conductor-newform} is proved by Jacquet--Piatetski-Shapiro--Shalika \cite[Th{\'e}or{\`e}me (5)]{jpss}.

\subsection{Central simple division algebras}


Let $D$ be a division algebra over $F$ of dimension $[D:F]=n^{2}$. Let $\Nrd=\Nrd_{D}$ denote the reduced norm on $D$. (See \cite[\S 4.1]{koch-zink} for a pleasant construction.) Any valuation on $D$ may be obtained via composing the reduced norm with a valuation on $F$ (see \cite[Theorem 1.4]{tignol-wadsworth}); let us normalise such a choice by $v_{D}=v_{F}\circ\Nrd$.

\subsubsection{Unit groups}
Define a basis of neighbourhoods of $1\in \Dx$ by $U_{D}(m)=\lbrace x\in \Dx \,:\, v_{D}(x-1)\geq m\rbrace$ for $m>0$ and let $U_{D}(0)=\ker(v_{D})$. Note that if $n=1$ (so that $D=F$) we recover $U_{D}(m)=U_{F}(m)$. It is an important fact that the norm map $\Nrd\colon \Dx \rightarrow\Fx$ is surjective (see \cite[Prop.\ 6, Ch.\ X-2, p.\ 195]{weil-basic} for instance). Upon restriction to the above neighbourhoods, for each $m\geq 0$ we have $\Nrd(U_{D}(m))=U_{D}(m)\cap F$.

\begin{lem}\label{lem:div-intersection-of-units}
For $m\geq 0$ we have the following:
\begin{enumerate}

\item $U_{D}(m)\cap \Fx=U_{F}(\lceil m/n\rceil )$;

\item $\Nrd(U_{D}(m))=U_{F}(\lceil m/n\rceil )$.

\end{enumerate}
\end{lem}

\begin{proof}
To prove (1), note that for all $a\in\Fx$ we have $v_{D}(a)=v_{F}(\Nrd(a))=v_{F}(a^{n})=nv_{F}(a)$. The definition of $U_{F}(\lceil m/n\rceil )$ is then equivalent to that of the intersection. Now (2) follows by applying (1) to $\Nrd(U_{D}(m))=U_{D}(m)\cap F$.
\end{proof}




\subsubsection{The level of a representation of \texorpdfstring{$\Dx$}{Dx}}

If $\chi$ is a quasi-character of $\Fx$ and $\pi'$ an irreducible, admissible representation of $\Dx$, analogous to the unramified case we form the twist $\chi\pi'=(\chi\circ \Nrd )\otimes \pi'$. Define the \textit{level} $l(\pi')$ of $\pi'$ to be the least non-negative integer $m$ such that $\pi'\vert_{U_{D}(m)}$ acts trivially. The notion of and $\varepsilon$-factor, as well as conductor $a(\pi')$, is defined by Godement--Jacquet \cite{godement-jacquet}, mutatis mutandis as in \S \ref{sec:conductor-artin}.

\begin{lem}\label{lem:div-level-koch}
Let $\pi'$ be an irreducible, admissible representation of $\Dx$. The conductor $a(\pi')$ is related to the level $l(\pi)$ by the formula
\begin{equation*}
a(\pi')=l(\pi')+n-1.
\end{equation*}

\end{lem}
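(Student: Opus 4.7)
The plan is to compute the $\varepsilon$-factor $\varepsilon(s,\pi',\psi)$ directly via Godement--Jacquet theory on $\Dx$ and to read off $a(\pi')$ from the exponent of $q^{1/2-s}$. One sets up the Godement--Jacquet zeta integral on $\Dx$,
$$Z(s,f,\Phi)=\int_{\Dx} f(g)\,\Phi(g)\,\abs{\Nrd(g)}^{s+(n-1)/2}\,d^{\times}g,$$
for a matrix coefficient $f$ of $\pi'$ and $\Phi\in\Cinfc(D)$; its functional equation, involving the Fourier transform $\widehat{\Phi}$ with respect to the self-dual measure associated to $\psi\circ\operatorname{Trd}$, implicitly defines $a(\pi')$ analogously to \eqref{eq:epsilon-factor-formula}.

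First I would choose test data adapted to the level of $\pi'$: take $\Phi=\mathbf{1}_{\Of}$, the indicator of the maximal order $\Of\subset D$, and $f(g)=\langle\pi'(g)v,\tilde v\rangle$ for a vector $v$ stabilised exactly by $U_{D}(l(\pi'))$. Decomposing $Z(s,f,\Phi)$ according to the reduced-norm valuation $v_{D}(g)$ and invoking Lemma \ref{lem:div-norm-on-units} to convert between $\abs{\Nrd}$-levels and $U_{D}$-filtrations, the integral localises to an explicit monomial in $q^{-s}$ depending only on $l(\pi')$.

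The crux is the dual side. The key geometric input is the standard fact that the different of $D/F$ equals $\pi_{D}^{n-1}\Of$, where $\pi_{D}$ is a uniformiser of $D$ with $v_{D}(\pi_{D})=1$. Consequently $\widehat{\mathbf{1}_{\Of}}$ is, up to a volume constant, the indicator of $\pi_{D}^{-(n-1)-nn(\psi)}\Of$; it is precisely the shift by $-(n-1)$ here that will produce the $n-1$ appearing in the lemma. Applying the functional equation and matching the exponents of $q^{1/2-s}$ on both sides then yields $a(\pi')=l(\pi')+n-1$. As a sanity check, the trivial representation (level $0$) corresponds under Jacquet--Langlands to the Steinberg representation of $\GL(n,F)$, whose conductor is well known to be $n-1$, accounting for the entire shift.

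The main obstacle will be the dual-side bookkeeping: one must integrate a matrix coefficient of $\tilde{\pi}'$ against an indicator whose support has been translated by the different, and confirm that contributions from lower reduced-norm valuations cancel so that the surviving term sits at $v_{D}=l(\pi')+n-1+nn(\psi)$. Controlling the interplay between the $\Dx$-filtration $U_{D}(m)$ and its intersection $U_{F}(\lceil m/n\rceil)$ with $\Fx$ (Lemma \ref{lem:div-intersection-of-units}) throughout this Fourier computation is the subtle point.
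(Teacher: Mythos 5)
Your plan takes a genuinely different route from the paper, which proves this lemma by citing Koch--Zink \cite[\S 4.3, (4.3.4)]{koch-zink} and translating notation: there the level is read off the valuation $v_{D}(c) = -n - j$ of the ``controller'' $c \in D$ of the character $\pi'\vert_{U_{D}(j)}$ with $j = l(\pi') - 1$, an additive-duality device encoding the same different exponent $n-1$ that you invoke. You correctly identify $\mathfrak{d}_{D/F} = \pi_{D}^{\,n-1}\Of$ (with $\pi_{D}$ a uniformiser of $D$, $\Of$ its maximal order) as the geometric source of the shift, and your Steinberg sanity check is right. However, the choice $\Phi = \mathbf{1}_{\Of}$ makes the zeta integral vanish identically whenever $l(\pi')>0$. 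Decomposing by shells $v_{D}(g) = k \geq 0$, each shell contributes
\begin{equation*}
q^{-k\left(s + \frac{n-1}{2}\right)}\int_{U_{D}(0)} f(\pi_{D}^{k}u)\,d^{\times}u,
\end{equation*}
and the inner integral is (up to pairing with $\tilde v$ after applying $\pi'(\pi_{D}^{k})$) the projection of $v$ onto the $U_{D}(0)$-invariant subspace of $\pi'$ --- which is zero for every ramified $\pi'$. So $Z(s, f, \mathbf{1}_{\Of}) \equiv 0$ and no $\varepsilon$-factor can be extracted from it.

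The fix is to localize $\Phi$ at the level: take $\Phi$ supported on a single coset of $U_{D}(l(\pi'))$ on which $f$ is constant, or, cleaner, take $\Phi(g) = \overline{f(g)}\,\mathbf{1}_{\Of^{\times}}(g)$ so that Schur orthogonality produces a nonvanishing $k = 0$ shell. The Fourier transform of this $\Phi$ is then a noncommutative Gauss sum, supported in the coset shifted by the different exactly as you anticipate; this is precisely what Koch--Zink's controller $c$ encodes and is what the paper imports rather than re-derives. A side remark on signs: with the paper's convention $n(\psi) = \min\{m : \psi\vert_{\pf^{m}} = 1\}$, the dual lattice of $\Of$ under $\psi\circ\operatorname{Trd}$ is $\pi_{D}^{\,n\,n(\psi)-(n-1)}\Of$, not $\pi_{D}^{-(n-1)-n\,n(\psi)}\Of$; the sign on $n\,n(\psi)$ in your expression appears to be flipped, although this does not affect the claimed shift by $n-1$.
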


\begin{proof}
This is proved in \cite[\S 4.3]{koch-zink} and stated explicitly in \cite[(4.3.4)]{koch-zink}. To assist with (mathematical) translation, we remark on the following: their unit groups $V_{j}$ equal our $U_{D}(j)$ for $j\geq 0$. Fix their element $\chi\in\Hom(V_{j}/V_{j+1},\Cx)$ to be the restriction of $\pi'$ to $V_{j}$ where $j=l(\pi')-1$. Then their $c\in D$, ``der Kontrolleur von $\chi$'', satisfies $v_{D}(c)=-a(\chi)=-a(\pi')$; it is constructed in \cite[(4.3.1)]{koch-zink} from where we have $v_{D}(c)=-n-j$, noting the non-triviality of $\chi$ on $V_{j}$. All together this implies $a(\pi')=n+j=n+l(\pi')-1$.
\end{proof}

\begin{lem}\label{lem:div-level-of-chi}
Let $\chi$ be a quasi-character of $\Fx$. Then
\begin{equation*}
l(\chi\circ\Nrd)=na(\chi)-n+1.
\end{equation*}
\end{lem}

\begin{proof}

By Lemma \ref{lem:div-intersection-of-units}, (2) consider $\chi$ restricted to $U_{F}(\lceil m/n \rceil)$ for each $m\geq 0$ as this set is equal to the image of $U_{D}(m)$ under $\Nrd$. By the minimality of $a(\chi)$, the character $\chi\circ\Nrd$ is trivial on $U_{D}(m)$ whenever
\begin{equation}\label{eq:div-min-level}
n(a(\chi)-1)\leq m -1.
\end{equation}
By the minimality of the level, we have equality in \eqref{eq:div-min-level} when $m=l(\chi\circ\Nrd)$.
\end{proof}

\subsection{The Jacquet--Langlands correspondence for division algebras}\label{sec:jacquet-langlands}

This special case of functoriality stipulates a bijection between the following:

\begin{itemize}

\item The set of equivalence classes of irreducible, admissible representations of $\GL_{n}(F)$, with unitary central character, which are \textit{square-integrable modulo centre}. These are precisely the square-integrable elements of $ \SG_{F}\cap \As_{F}(n)$.

\item The set of equivalence classes of irreducible, admissible representations of $\Dx$ with unitary central character where $D$ is a central-simple $F$-algebra of dimension $n^{2}$.

\end{itemize}

\begin{rem}
In the above bijection, if $\pi$ corresponds to $\pi'$ then their central characters agree: $\omega_{\pi}=\omega_{\pi'}$. Moreover, $\chi\pi$ corresponds to $\chi\pi'$ for any quasi-character $\chi$. As a consequence of the Peter--Weyl theorem, the irreducible representations of $\Dx$ are finite dimensional (since $\Dx$ is compact modulo centre).
\end{rem}

The correspondence as stated here is due to Rogawski \cite[Theorem 5.8]{rogawski}, where the original case $n=2$ was famously proved by Jacquet--Langlands \cite{jacquet-langlands}. The most general statement allows one to replace $\Dx$ with $\GL_{m}(D)$ where $D$ has dimension $d^{2}$ and $m$ must satisfy $n=md$. This is established in \cite{dkv} by Deligne--Kazhdan--Vign\'eras.

\subsection{The main proofs}\label{sec:main-proofs}

Here we provide a stand-alone proof of Proposition \ref{prop:sq-int-formula}, our main result in the quasi-square-integrable case. Assume the hypotheses and notations of Propositions \ref{prop:sq-int-formula} and \ref{prop:cen-char}; in particular, $\pi\in\SG_{F}$.

\subsubsection{Proof of Proposition \ref{prop:sq-int-formula}}

The following lemma reduces the proof to the case where $\pi$ is square-integrable.

\begin{lem}
For all quasi-characters $\chi$ with $a(\chi)=0$ we have $a(\chi\pi)=a(\pi)$.
\end{lem}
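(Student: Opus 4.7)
The plan is to reduce to the newform characterisation of the conductor, exploiting that every $\pi\in\SG_{F}$ is generic. First I would observe that $a(\chi)=0$ is exactly the statement that $\chi$ is unramified, i.e.\ $\chi\vert_{\ofx}=1$.

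Then I would invoke the newform definition of $a(\pi)$ given in \S \ref{sec:conductor-newform}: it is the least $m\geq 0$ such that $\pi$ has a non-zero $K_{1}(m)$-fixed vector. The crux is a two-line observation: for every $m\geq 0$ one has $\det(K_{1}(m))\subseteq \det(K)=\ofx$, so the character $\chi\circ\det$ of $\GL(n,F)$ is trivial on $K_{1}(m)$. Consequently, a vector $v$ in the common underlying space of $\pi$ and $\chi\pi$ is $K_{1}(m)$-fixed under the action of $\chi\pi$ if and only if it is $K_{1}(m)$-fixed under $\pi$. Hence the two spaces $\pi^{K_{1}(m)}$ and $(\chi\pi)^{K_{1}(m)}$ coincide as subspaces, and the least integer $m$ for which they are non-zero is the same. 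This gives $a(\chi\pi)=a(\pi)$.

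An alternative route, closer to the $\varepsilon$-factor definition of \S \ref{sec:conductor-artin}, would be to write $\chi=\abs{\,\cdot\,}^{\alpha}$ for some $\alpha\in\C$ (possible precisely because $\chi$ is unramified) and use the standard shift identity $\varepsilon(s,\chi\pi,\psi)=\varepsilon(s+\alpha,\pi,\psi)$. Comparing the powers of $q^{-s}$ appearing in Equation \eqref{eq:epsilon-factor-formula} on both sides then forces $a(\chi\pi)-n(\psi)n=a(\pi)-n(\psi)n$, hence the claim.

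I do not expect any genuine obstacle here: both arguments are essentially immediate, and the only mild subtlety is to be consistent about which of the two equivalent definitions of the conductor is being used. The newform route is shorter and sits most comfortably with the generic hypothesis that is in force throughout $\SG_{F}$.
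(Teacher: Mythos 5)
Your first argument is exactly the paper's proof, only spelled out in more detail: the paper simply notes that for generic $\pi\in\SG_F$ one has $\pi^{K_1(m)}\neq\{0\}$ if and only if $(\chi\pi)^{K_1(m)}\neq\{0\}$, which is precisely your observation that $\chi\circ\det$ is trivial on $K_1(m)\subseteq\GL(n,\of)$ when $\chi$ is unramified. The $\varepsilon$-factor alternative is also correct but not what the paper does; your preferred route matches.
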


\begin{proof}
Let $m\geq 0$. The space $\pi^{K_{1}(m)}$ of $K_{1}(m)$-fixed vectors in $\pi$ is non-zero if and only if $(\chi\pi)^{K_{1}(m)}\neq \{0\}$. As $\pi\in \SG_{F}$, both $\pi$ and $\chi\pi$ are generic, and so $a(\pi)=\min\{m\geq 0 : \pi^{K_{1}(m)}\neq 0\}=a(\chi\pi)$.
\end{proof}

Henceforth we assume $\pi$ to be square-integrable. The generalised Jacquet--Langlands correspondence implies $a(\chi\pi)=a(\chi\pi')$ where $\pi'$ is the irreducible, admissible, unitary representation of $\Dx$ associated to $\pi$ as determined by \cite[Theorem 5.8]{rogawski}. The proof of Proposition \ref{prop:sq-int-formula} now follows by applying Lemmas \ref{lem:div-level-koch} and \ref{lem:div-level-of-chi} to the following.

\begin{lem}
Let $\pi'$ be an irreducible, admissible, unitary representation of $\Dx$ and $\chi$ a quasi-character of $\Fx$. Then

\begin{equation}\label{eq:div-level-inequality}
l(\chi\pi')\leq \max\{l(\pi'),\,l(\chi\circ\Nrd)\}
\end{equation}
with equality in \eqref{eq:div-level-inequality} whenever $\pi'$ is twist minimal or $l(\pi')\neq l(\chi\circ\Nrd)$.
\end{lem}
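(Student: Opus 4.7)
The plan is to argue directly from the definition of the level on $\Dx$, exploiting that $\chi\circ\Nrd$ is a one-dimensional character, so the twisted action of any $u\in U_D(m)$ factors as the scalar $\chi(\Nrd u)$ times the operator $\pi'(u)$. The argument then splits cleanly into three parts: the upper bound, equality under the hypothesis $l(\pi')\neq l(\chi\circ\Nrd)$, and the residual twist-minimal case.

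First I would set $m=\max\{l(\pi'),\,l(\chi\circ\Nrd)\}$ and observe that $U_D(m)$ sits inside both $U_D(l(\pi'))$ and $U_D(l(\chi\circ\Nrd))$, so for each $u\in U_D(m)$ we have simultaneously $\pi'(u)=\Id$ and $\chi(\Nrd u)=1$; hence $(\chi\pi')(u)=\Id$, which is exactly the bound \eqref{eq:div-level-inequality}.

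Next, assuming $l(\pi')\neq l(\chi\circ\Nrd)$, the proof of equality splits into two symmetric subcases. If $l(\pi')>l(\chi\circ\Nrd)$, minimality of $l(\pi')$ furnishes $u\in U_D(l(\pi')-1)$ with $\pi'(u)\neq\Id$; since $l(\pi')-1\geq l(\chi\circ\Nrd)$, this same $u$ also satisfies $\chi(\Nrd u)=1$, so $(\chi\pi')(u)=\pi'(u)\neq\Id$ and therefore $l(\chi\pi')\geq l(\pi')$. The case $l(\chi\circ\Nrd)>l(\pi')$ is handled identically with the roles reversed: pick $u\in U_D(l(\chi\circ\Nrd)-1)$ with $\chi(\Nrd u)\neq 1$, note $\pi'(u)=\Id$, and conclude $(\chi\pi')(u)=\chi(\Nrd u)\,\Id\neq\Id$.

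For the twist-minimal case it remains only to treat the configuration $l(\pi')=l(\chi\circ\Nrd)$. By Lemma \ref{lem:div-level-koch}, the conductor on $\Dx$ differs from the level by the constant $n-1$, so twist-minimality of $\pi'$ is equivalent to $l(\pi')\leq l(\chi'\pi')$ for every quasi-character $\chi'$ of $\Fx$; applied to our $\chi$ this supplies the reverse inequality and completes the argument. The one subtle point, and the only place care is required, is that for higher-dimensional $\pi'$ the factors $\chi(\Nrd u)$ and $\pi'(u)$ do not \emph{independently} detect whether $(\chi\pi')(u)=\Id$—a priori they could conspire to cancel. I sidestep this by always selecting the witnessing element $u$ inside the larger of the two nested unit groups, where one factor is trivial by definition and the other is forced to be non-trivial in order to violate minimality.
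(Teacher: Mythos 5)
Your proof is correct and takes the same route as the paper's: the upper bound comes from evaluating $(\chi\pi')(u)=\chi(\Nrd u)\,\pi'(u)$ on the larger unit group, the two unequal-level subcases are handled by choosing a witnessing element in the group where exactly one factor is forced to be non-trivial, and the residual equal-level case is disposed of by translating twist-minimality of $a(\pi')$ into twist-minimality of $l(\pi')$ via the affine relation of Lemma \ref{lem:div-level-koch}. The paper compresses all of this into ``by minimality,'' and your write-up simply fills in those details — including the genuinely necessary observation that when $l(\pi')=l(\chi\circ\Nrd)$ the two factors could cancel, which is precisely the gap the twist-minimal hypothesis closes.
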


\begin{proof}
By definition, $(\chi\pi')(x)=\chi(\Nrd(x))\pi'(x)$ for every $x\in \Dx$. One immediately obtains \eqref{eq:div-level-inequality} by minimality. Equality also follows in the given cases, noting that twist minimality in $a(\pi')$ is equivalent to twist minimality in $l(\pi')$ since they are linearly related (by Lemma \ref{lem:div-level-koch}).

\end{proof}

\subsubsection{Proof of Proposition \ref{prop:cen-char}}

Taking $m=l(\pi')$ in Lemma \ref{lem:div-intersection-of-units}, (1) and using the formula of Lemma \ref{lem:div-level-koch}, we deduce that
\begin{equation*}
a(\omega_{\pi})\leq \left\lceil\frac{l(\pi')}{n}\right\rceil<\frac{a(\pi)-n+1}{n}+1=\frac{a(\pi)+1}{n}.
\end{equation*}
Thus we infer that $na(\omega_{\pi})\leq a(\pi)$ as required.

\section{Characters preserving the conductor under twisting}\label{sec:analysis}

The goal of this section is twofold: in \S \ref{sec:spaces-of-characters} we count the number of characters $\chi$ such that $a(\chi\pi)$ is equal to a given integer. Then, in \S \ref{sec:leading-interference}, we explicitly analyse the behaviour of the dominant and interference terms of Theorem \ref{thm:exact-formula}. These questions are motivated by their applications to analytic number theory.

\subsection{Sets of twist-fixing characters}\label{sec:spaces-of-characters}

\subsubsection{Characters of a given conductor}

The valuation $v_{F}$ defines a split exact sequence $1\longrightarrow \ofx \longrightarrow \Fx \displaystyle{\arrup{\longrightarrow}{v_{F}}} \Z\longrightarrow 1$. We thus write any quasi-character $\chi$ on $\Fx$ as $\chi(x)=\chi'(x)q^{-v_{F}(x)\alpha}$ for some $\alpha\in \C$ and a character $\chi'$ of $\Fx$ such that $\chi'(\varpi)=1$. We denote the space of such $\chi'$ by $\Xf$ so that the unitary dual of $\ofx$ satisfies $\hat{\of}^{\times}\isom \Xf$. With interest in characters that fix the conductor under twisting, we define the following $\Xf$-subsets:
\begin{equation}
\Xf(k)=\{ \chi \in \Xf \, : \, a(\chi)\leq k\};\quad\Xf'(k)=\{ \chi \in \Xf \, : \, a(\chi) = k\};
\end{equation}
and
\begin{equation}
\Xf_{\pi}'(k,j)=\{ \chi \in \Xf \, : \, a(\chi) = k \text{ and }a(\chi\pi) = j \}
\end{equation}
for some $k,j\geq 0$.

Our present point of departure is to count the number of characters contained in $\Xf_{\pi}'(k,j)$. We first consider the cardinalities of $\Xf(k)$ and $\Xf'(k)$.

\begin{lem}\label{lem:number-of-chars}
For each $k \geq 1$, $\#\Xf(k)=q^{k-1}(q-1)$, $\#\Xf'(1) =  q-2$, and for $k\geq 2$, $\#\Xf'(k) =q^{k-2}(q-1)^{2}$.
\end{lem}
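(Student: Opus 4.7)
The plan is to exploit the identification $\Xf \cong \widehat{\mathfrak{o}}^{\times}$ already noted in the excerpt and reduce the statement to counting characters of a finite abelian group of known order.

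First, I would observe that $\Xf(k)$ is precisely the set of characters in $\widehat{\mathfrak{o}}^{\times}$ which are trivial on $U_{F}(k) = 1 + \varpi^{k}\of$. By the definition of $a(\chi)$ (the least $m \geq 0$ with $\chi(U_{F}(m)) = \{1\}$) combined with the fact that the $U_{F}(m)$ form a descending chain of neighbourhoods, this is exactly the condition $a(\chi) \leq k$. Consequently, the characters in $\Xf(k)$ factor through the quotient $\ofx/U_{F}(k) \cong (\of/\pf^{k})^{\times}$, giving a canonical identification $\Xf(k) \cong \widehat{(\of/\pf^{k})^{\times}}$.

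Next I would compute the cardinality of $(\of/\pf^{k})^{\times}$ for $k \geq 1$. Since $\#(\of/\pf^{k}) = q^{k}$ and the quotient map $(\of/\pf^{k})^{\times} \to (\of/\pf)^{\times} = \Fqx$ is surjective with kernel $U_{F}(1)/U_{F}(k)$ of order $q^{k-1}$, we obtain $\#(\of/\pf^{k})^{\times} = q^{k-1}(q-1)$. Pontryagin duality for finite abelian groups then yields $\#\Xf(k) = q^{k-1}(q-1)$, as claimed.

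Finally, I would deduce the formulas for $\Xf'(k)$ by taking a set difference. Note that $\Xf'(k) = \Xf(k) \setminus \Xf(k-1)$ for $k \geq 1$, which follows directly from the definitions. For $k \geq 2$ both counts are given by the formula above and subtraction gives
\begin{equation*}
\#\Xf'(k) = q^{k-1}(q-1) - q^{k-2}(q-1) = q^{k-2}(q-1)^{2}.
\end{equation*}
For $k = 1$ one must handle $\Xf(0)$ separately: this set consists of those $\chi \in \Xf$ trivial on $U_{F}(0) = \ofx$, which, combined with the defining condition $\chi(\varpi) = 1$, forces $\chi$ to be the trivial character, so $\#\Xf(0) = 1$. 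Hence $\#\Xf'(1) = (q-1) - 1 = q - 2$. There is no real obstacle here; the only point requiring care is the boundary case $k = 1$, where the formula $q^{k-2}(q-1)^{2}$ would fail because $\Xf(0)$ is the singleton rather than a group of order $q-1$.
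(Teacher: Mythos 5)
Your proof is correct and rests on the same essential idea as the paper's: identify $\Xf(k)$ with the character group of $\ofx/U_F(k)\cong(\of/\pf^k)^\times$ and count. The paper phrases this as an inductive filtration argument ($\Xf(k)/\Xf(k-1)\cong\of/\pf$ together with $\Xf(1)\cong(\of/\pf)^\times$), while you compute $\#(\of/\pf^k)^\times$ directly and apply Pontryagin duality once; the difference is cosmetic, and the subtraction step handling the $k=1$ boundary case is the same in both.
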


\begin{proof}
Consider the subgroup series $\{1\}=\Xf(0)\leq \Xf(1)\leq\cdots \leq\Xf(k)\leq\Xf$. For $k \geq l \ge k/2 \geq 1$, we have $\Xf(k) / \Xf(l) \isom U_{F}(l)/U_{F}(k) \isom \of/\pf^{k-l}$. In particular, taking $l=k-1$ and noting $\Xf(1)\isom(\of/\pf)^{\times}$, one counts the given cardinalities inductively. The number $\#\Xf'$ is obtained by subtraction.
\end{proof}

We remark that in \cite[Lemmas 2.1 \& 2.2]{corbett-saha} we counted the elements $\chi\in\Xf'(k)$ for which $a(\chi\mu)$ remains fixed for a given $\mu\in\Xf'(k)$, characterising the existence of such elements as $q$ becomes small. In the present work we consider a ``nonabelian'' variant of this result by characterising the set $\Xf_{\pi}'(k,j)$.

\subsubsection{Character twists of a given conductor}

Suppose that $\pi\in \SG_{F}\cap\As_{F}(n)$ so that Proposition \ref{prop:sq-int-formula} applies. For integers $k,j\geq 0$, if either $\pi$ is twist minimal or $k\neq a(\pi)/n$ then
\begin{equation}\label{eq:trivial-cases}
\Xf_{\pi}'(k,j)=\left\lbrace\begin{array}{cl}\vspace{0.05in}
\Xf_{\pi}'(k)& \text{if }j= \max\{a(\pi),\,nk\}\\\vspace{0.05in}
\emptyset & \text{if }j\neq \max\{a(\pi),\,nk\}.
\end{array}\right.
\end{equation}
The cases considered in \eqref{eq:trivial-cases} are special cases of the following lemma.

\begin{lem}\label{lem:twist-fixing-sq}
For each $\pi\in\SG_{F}\cap\As_{F}(n)$ write $\pi=\mu\pi^{\min}$ for a twist minimal representation $\pi^{\min}$. For integers $j,k\geq 0$ we have $\Xf_{\pi}'(k,j)=\emptyset$ unless $a(\pi^{\min})\leq j \leq  \max\{a(\pi),\, nk\}$, in which case
\begin{equation}
\#\Xf_{\pi}'(k,j)\leq \#\Xf\big(\big\lfloor\tfrac{j}{n}\big\rfloor\big).
\end{equation}
\end{lem}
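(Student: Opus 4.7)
The plan is to separate out the trivial case $\ell=0$, reduce the non-trivial case via Proposition \ref{prop:sq-int-formula}, and then exploit the fact that multiplication by a fixed character is a bijection on $\Xf$.

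First I would establish the range constraints on $j$. The upper bound $j\leq \max\{a(\pi),nk\}$ is immediate from Proposition \ref{prop:sq-int-formula} (any $\chi\in\Xf'(k)$ has $a(\chi)=k$). The lower bound $j\geq a(\pi^{\min})$ follows directly from the lower half of Corollary \ref{cor:upper-and-lower-bounds} (with $r=1$). Outside this interval $\Xf'_{\pi}(k,j)$ is empty, so we may define $\ell\geq 0$ by $j=\max\{a(\pi),\,nk\}-\ell$.

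If $\ell=0$, then $\Xf'_{\pi}(k,j)\subseteq \Xf'(k)\subseteq \Xf(k)$ and the bound is trivial. So assume $\ell>0$. By the equality statement in Proposition \ref{prop:sq-int-formula}, $\ell>0$ forces $\pi$ to be \emph{not} twist minimal \emph{and} $a(\pi)=nk$. Applying Proposition \ref{prop:sq-int-formula} to $\pi=\mu\pi^{\min}$ (twist minimal $\pi^{\min}$) yields $a(\pi)=\max\{a(\pi^{\min}),\,na(\mu)\}$; since $\pi$ is non-twist-minimal we must be in the second case, so $a(\mu)=k$. Replacing $\mu$ by an unramified twist if necessary (this does not change $\pi^{\min}$), we may assume $\mu\in\Xf$.

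Now the key step: write $\chi\pi=(\chi\mu)\pi^{\min}$ and apply Proposition \ref{prop:sq-int-formula} to the twist minimal representation $\pi^{\min}$, obtaining the exact identity
\begin{equation*}
\max\{a(\pi^{\min}),\,na(\chi\mu)\}=a(\chi\pi)=nk-\ell.
\end{equation*}
This forces $na(\chi\mu)\leq nk-\ell$, and since $a(\chi\mu)\in\Z_{\geq 0}$ we deduce $a(\chi\mu)\leq k-\lceil \ell/n \rceil$. Therefore
\begin{equation*}
\Xf'_{\pi}(k,j)\;\subseteq\;\{\chi\in\Xf \,:\, a(\chi\mu)\leq k-\lceil \ell/n\rceil\}.
\end{equation*}
Since $\mu\in\Xf$, the map $\chi\mapsto \chi\mu$ is a bijection of $\Xf$, and so the right-hand set has cardinality exactly $\#\Xf(k-\lceil \ell/n\rceil)$. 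Finally $\lceil \ell/n\rceil\geq \lfloor \ell/n\rfloor$ gives $\#\Xf(k-\lceil \ell/n\rceil)\leq \#\Xf(k-\lfloor \ell/n\rfloor)$ as required.

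The only non-routine step is the bookkeeping around $a(\mu)=k$ and the reduction to $\mu\in\Xf$; once these are in place the rest is a direct application of Proposition \ref{prop:sq-int-formula} combined with the translation-invariance of counting characters. I don't expect any real obstacle, as the result is essentially a packaging of the equality case of \eqref{eq:sq-int-formula} through the bijection $\chi\mapsto\chi\mu$.
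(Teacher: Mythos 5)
Your proof is correct and follows essentially the same route as the paper's: both reduce to the case $a(\pi)=nk$ with $\pi$ non-twist-minimal, apply Proposition~\ref{prop:sq-int-formula} to $\chi\pi=(\chi\mu)\pi^{\min}$ to force $a(\chi\mu)\leq\lfloor j/n\rfloor=k-\lceil\ell/n\rceil$, and then count via the translation bijection $\chi\mapsto\chi\mu$. You spell out a few bookkeeping points the paper leaves implicit (the $\ell=0$ case, normalising $\mu$ into $\Xf$, and noting the sharper $\lceil\ell/n\rceil$), but the underlying argument is identical.
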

\begin{proof}
If either $\pi$ is minimal or $k\neq a(\pi)/n$ then the lemma follows by \eqref{eq:trivial-cases}. Hence assume $a(\pi)=kn$ and $\pi=\mu\pi^{\min}$ where $\pi^{\min}$ is twist minimal with $a(\pi^{\min})<a(\pi)$ and $\mu\in\Xf'(k)$. Then $\Xf'_{\pi}(k,j)=\emptyset$ unless $a(\pi^{\min})\leq j \leq nk$. In this case, if there exists a $\chi\in \Xf'(k)$ such that $\max\{a(\pi^{\min}),\,na(\chi\mu)\}=j$ then there are $\#\Xf(\lfloor j/n\rfloor)$ of them as we must have $\chi\in \mu^{-1}\Xf(\lfloor j/n\rfloor)$.

\end{proof}

More generally, Lemma \ref{lem:twist-fixing-sq} may be assembled to describe all of $\As_{F}(n)$.

\begin{cor}\label{cor:twist-fix}
Let $\pi\in\As_{F}(n)$. For integers $j,k\geq 0$ we have $\Xf_{\pi}'(k,j)=\emptyset$ if $j> a(\pi) + nk$. Write $\pi=\pi_1\boxplus\cdots\boxplus\pi_r$ as in \eqref{eq:decomposition-pi}.
\begin{enumerate}
\item For each $1\leq i\leq r$, if $\pi_i$ is either minimal or $a(\pi_i)\neq kn_i$ then $$\#\Xf_{\pi}'(k,j)\leq \#\Xf\big(\big\lfloor\tfrac{j}{n}\big\rfloor\big).$$ 
\item Otherwise, define the set of indices $\Psi_{k}(\pi)\subset \{1,\ldots,r\}$ such that $i\in \Psi_{k}(\pi)$ if and only if $ a(\pi_{i})=n_{i}k$ and $a(\pi_{i}^{\min})<a(\pi_{i})$, where $\pi_{i}^{\min}$ is a minimal representation satisfying $\pi_{i}=\mu_{i}\pi_{i}^{\min}$. Then for any $i'\in\Psi_{k}(\pi)$ we have $$\#\Xf_{\pi}'(k,j)\leq \#\Xf\big(\bigg\lfloor\frac{1}{n_{i'}}\big(
j-\sum_{i\not\in\Psi_{k}(\pi)}n_i k -\sum_{i\in\Psi_{k}(\pi) \sm\{ i'\}}a(\pi_{i}^{\min})\big)
\bigg\rfloor\big).$$
\end{enumerate}

\end{cor}

\begin{proof}
The bound $j> a(\pi) + nk$ is derived from the fact that $a(\chi\pi)\leq a(\pi) + na(\chi)$ for any $\chi\in\Xf$ (see Corollary \ref{cor:upper-and-lower-bounds}). Now suppose $a(\chi)=k$ and $a(\chi\pi)=j$.
By Proposition \ref{prop:sq-int-formula} we have $a(\chi\pi_i)=\max\{a(\pi_i),n_i k\}$ for all $i\not\in \Psi_{k}(\pi)$. In particular, for $\chi\in\Xf'(k)$ we have that $ a(\chi\pi)=j$ if and only if
\begin{equation}\label{eq:chis-and-psis}
j=\sum_{i\in \Psi_{k}(\pi)}a(\chi \pi_i)+ \sum_{i\not\in  \Psi_{k}(\pi)}\max\{ a(\pi_i),n_i k\}.
\end{equation}
Then, if $\Psi_{k}(\pi)=\emptyset$ for each $\chi\in\Xf'(k)$, as in case (1), we have $a(\chi\pi)=j$ for each $\chi$, given \eqref{eq:chis-and-psis} holds. Moreover, since $j\geq kn$ we obtain $\#\Xf_{\pi}'(k,j)\leq\#\Xf(\lfloor j/n\rfloor)$ as claimed. Otherwise, pick $i'\in\Psi_{k}(\pi)$ as in case (2). If $a(\chi\pi)=j$ then
$a(\chi\pi_{i'})=j'$ where we define
$$j'= j- \sum_{i\neq i'}a(\chi\pi_{i}).$$
Then the number of $\chi\in \Xf'(k)$ such that $a(\chi \pi_{i'})=j'$ is at most $\#\Xf(\lfloor j'/n_{i'}\rfloor)$ by Lemma \ref{lem:twist-fixing-sq}, whence we deduce the claim.



\end{proof}


\subsection{The leading and interference terms}\label{sec:leading-interference}

Here we detail the asymptotic behaviour of $\Delta_{\chi}(\pi)$ and $\delta_{\chi}(\pi)$. Our first port of call is to describe the rarity with which the interference term satisfies $\delta_{\chi}(\pi)\neq 0$. The following lemma follows directly from the definition of $\delta_{\chi}(\pi)$ in Theorem \ref{thm:exact-formula}.

\begin{lem}[Absence of interference]\label{lem:interference}

Let $\pi$ be an irreducible, admissible representation of $\GL_{n}(F)$ written, as in \eqref{eq:decomposition-pi}, in terms of irreducible, quasi-square-integrable representations, $\pi=\pi_{1}\boxplus\cdots\boxplus\pi_{r}$. Recall that $\pi_{i}\in\SG_{F}$ is a representation of $\GL_{n_{i}}(F)$ for $1\leq i \leq r$. Let $\chi$ be a quasi-character of $\Fx$.

\begin{enumerate}

\item We have $\delta_{\chi}(\pi)=0$ if $n_{i}\nmid a(\pi_{i})$ for each $1\leq i \leq r$.

\item Suppose $n_{i}\mid a(\pi_i)$ for some $1\leq i \leq r$. Then $\delta_{\chi}(\pi_{i})=0$ whenever $a(\pi_{i})\neq n_{i}a(\chi)$.

\item Suppose $a(\pi_{i})= n_{i}a(\chi)$ for some $1\leq i \leq r$. Then $\delta_{\chi}(\pi_{i})= 0$ if and only if $a(\chi\mu_{i})=a(\chi)$ where $\pi_{i}=\mu_{i}\pi_{i}^{\min}$ is written as the $\mu_{i}$-twist of a minimal representation $\pi_{i}^{\min}$.

\end{enumerate}

\end{lem}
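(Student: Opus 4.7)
The plan is to reduce each of the three assertions to the explicit piecewise definition of $\delta_\chi(\pi_i)$ from Theorem \ref{thm:exact-formula}, using the semi-group homomorphism property to pass between $\delta_\chi(\pi)$ and its individual summands. The one non-trivial input I would isolate at the outset is the following consequence of Proposition \ref{prop:sq-int-formula} (noted in Remark \ref{rem:minimal}): if $\pi_i = \mu_i \pi_i^{\min} \in \SG_F$ fails to be twist minimal, then applying Proposition \ref{prop:sq-int-formula} to $\pi_i = \mu_i \pi_i^{\min}$ forces $a(\pi_i) = n_i a(\mu_i)$, so in particular $n_i \mid a(\pi_i)$.

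For part (1), the hypothesis $n_i \nmid a(\pi_i)$ combined with the contrapositive of the above forces each $\pi_i$ to be twist minimal. Taking $\pi_i^{\min} = \pi_i$ and $\mu_i = 1$, we have $a(\mu_i) = 0$, so the non-trivial branch of the definition of $\delta_\chi(\pi_i)$ is only activated when $a(\chi) = 0$, in which case it collapses to $a(\pi_i) - a(\pi_i) = 0$. Summing over $i$ gives $\delta_\chi(\pi) = 0$. Part (2) proceeds analogously: for an index $i$ with $n_i \mid a(\pi_i)$ and $\pi_i$ not twist minimal, the identity $a(\pi_i) = n_i a(\mu_i)$ transforms the assumption $a(\pi_i) \neq n_i a(\chi)$ into $a(\mu_i) \neq a(\chi)$, which is precisely the branch of the definition producing $\delta_\chi(\pi_i) = 0$; the twist-minimal subcase handles identically with $a(\mu_i) = 0$.

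For part (3), the hypothesis $a(\pi_i) = n_i a(\chi)$ together with the structural identity yields $a(\mu_i) = a(\chi)$ whenever $\pi_i$ is not twist minimal, activating the non-trivial branch of the definition. The formula then reads
\[
\delta_\chi(\pi_i) = a(\pi_i) - \max\{a(\pi_i^{\min}),\, n_i a(\chi\mu_i)\},
\]
and since $a(\pi_i^{\min}) < a(\pi_i) = n_i a(\chi)$, vanishing is equivalent to $n_i a(\chi\mu_i) = n_i a(\chi)$, i.e.\ $a(\chi\mu_i) = a(\chi)$ as claimed. The twist-minimal subcase is automatic: $\mu_i = 1$ forces $a(\chi\mu_i) = a(\chi)$ tautologically, and the formula vanishes directly.

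I anticipate no substantive obstacle, since the whole argument is a careful case analysis of the piecewise definition of $\delta_\chi$. The only non-trivial ingredient is the conductor identity $a(\pi_i) = n_i a(\mu_i)$ for non-twist-minimal quasi-square-integrable $\pi_i$, which is an immediate consequence of Proposition \ref{prop:sq-int-formula}. A mild subtlety worth flagging, though not a real obstruction, is the (inessential) non-uniqueness of the decomposition $\pi_i = \mu_i \pi_i^{\min}$; any ambiguity is absorbed into the choice of $\pi_i^{\min}$ and has no effect on the values appearing in the formula.
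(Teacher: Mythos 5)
Your proposal is correct and matches the paper's approach. The paper gives no independent argument — it simply asserts that the lemma ``follows directly from the definition of $\delta_{\chi}(\pi)$ in Theorem \ref{thm:exact-formula}'' — and your case analysis is precisely that direct verification, with the one genuine ingredient correctly isolated: the identity $a(\pi_{i})=n_{i}a(\mu_{i})$ for non-twist-minimal $\pi_{i}\in\SG_{F}$, which the paper records in Remark \ref{rem:minimal} as a consequence of Proposition \ref{prop:sq-int-formula}.

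One small caveat on the remark you flag at the end: for the twist-minimal subcase of part (3), the convention $\mu_{i}=1$ is not merely cosmetic. If $\pi_{i}$ is twist minimal with $a(\pi_{i})=n_{i}a(\chi)$, one could choose an eccentric decomposition $\pi_{i}=\mu_{i}\pi_{i}^{\min}$ with $a(\mu_{i})=a(\chi)>0$ and $\pi_{i}^{\min}$ still twist minimal (e.g.\ $\mu_{i}=\chi^{-1}$, $\pi_{i}^{\min}=\chi\pi_{i}$), in which case $a(\chi\mu_{i})=0\neq a(\chi)$ while nevertheless $\delta_{\chi}(\pi_{i})=0$; thus the ``if and only if'' in part (3) is read with the tautological normalisation $\mu_{i}=1$ (as Remark \ref{rem:minimal} indicates), and the non-uniqueness is not quite invisible in the formula, though it never changes the value of $\delta_{\chi}(\pi_{i})$ itself. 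For non-twist-minimal $\pi_{i}$ your claim is exactly right: any two admissible $\mu_{i},\mu_{i}'$ differ by a character of strictly smaller conductor, so $a(\chi\mu_{i})=a(\chi)$ is independent of the choice. This does not affect the correctness of your argument, which implicitly uses the standard normalisation throughout.
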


\begin{proof}
Recall that $\delta_{\chi}(\pi_i)=a(\pi_i)-\max\{a(\pi_{i}^{\min}),n_ia(\chi\mu_{i})\}$ for $a(\chi)=a(\mu_i)$ and vanishes otherwise. If $n_i\nmid a(\pi_i)$ then $a(\pi_i)=a(\pi_{i}^{\min})\geq n_ia(\mu_{i})\geq n_ia(\chi\mu_{i})$ for each $1\leq i\leq r$. This proves (1). For (2) we let $n_i\mid a(\pi)$. If $a(\pi^{\min}_{i})=a(\pi_i)$ we argue as in (1). Else, $a(\pi_i)=n_{i}a(\mu_i)=n_{i} a(\chi)$ when $\delta_{\chi}(\pi_i)\neq 0$, as claimed. The vanishing of $\delta_{\chi}(\pi_i)$ in (3) is characterised by the condition $n_{i}a(\chi)=\max\{a(\pi_{i}^{\min}),n_{i}a(\chi\mu_{i})\}$ for $a(\chi)=a(\mu_i)$. If $n_{i}a(\chi)=a(\pi_{i}^{\min})$ we again argue as in (1), forcing the remaining condition $a(\chi\mu_{i})=a(\chi)$.
\end{proof}

\begin{cor}[Dominant behaviour]
In each case of Lemma \ref{lem:interference} for which $\chi$ and $\pi=\pi_{1}\boxplus\cdots\boxplus\pi_{r}$ satisfy $\delta_{\chi}(\pi)=0$, we have the ``dominant'' conductor formula
\begin{equation}
a(\chi\pi)=\sum_{i=1}^{r} \max \{ a(\pi_{i}) ,\,n_{i} a(\chi)\}.
\end{equation}
\end{cor}

Our final port of call is to quantify the rarity of $\delta_{\chi}(\pi)=0$, as in Lemma \ref{lem:interference}.

\begin{lem}[Regularity of interference]
Let $\pi=\pi_{1}\boxplus\cdots\boxplus \pi_{r}$ as in \eqref{eq:decomposition-pi}. Suppose $\chi\in\Xf$ and that for some $1\leq i \leq r$ we have $\delta_{\chi}(\pi_{i})\neq 0$. Write $\pi_{i}=\mu_{i}\pi_{i}^{\min}$ as per Lemma \ref{lem:interference}, (3). Then, for each $0< j\leq a(\pi_{i})-a(\pi_{i}^{\min})$ satisfying $j\equiv a(\pi_{i})\Mod{n_{i}}$, there are precisely
\begin{equation}\label{eq:quant-lemma-1}
\#\Xf\bigg(\frac{a(\pi_{i})-j}{n}\bigg)
\end{equation}
characters $\chi\in\Xf$ such that $\delta_{\chi}(\pi_{i})=a(\pi_{i})-j$. The number of $\chi\in \Xf(a(\pi_{i})/n)$ satisfying $\delta_{\chi}(\pi_{i})=a(\pi_{i})$ is 
\begin{equation}\label{eq:quant-lemma-2}
(q-2)\times\#\Xf\bigg(\frac{a(\pi_{i})}{n}-1\bigg).
\end{equation}
\end{lem}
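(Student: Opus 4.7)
The plan is to unpack the definition of the interference term $\delta_\chi(\pi_i)$ given in Theorem \ref{thm:exact-formula}, and then enumerate the $\chi\in\Xf$ with prescribed value of $\delta_\chi(\pi_i)$ via the bijection on $\hat{\of}^{\times}\isom\Xf$ induced by multiplication by $\mu_i$.

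First, I would use the hypothesis $\delta_\chi(\pi_i)\ne 0$ together with the defining case-split of $\delta_\chi$ and Proposition \ref{prop:sq-int-formula} to constrain both $a(\chi)$ and $a(\pi_i)$. The definition of $\delta_\chi$ forces $a(\chi)=a(\mu_i)=:k$, and the identity $a(\pi_i)=\max\{a(\pi_i^{\min}),\,n_ik\}$ from Proposition \ref{prop:sq-int-formula} further forces the maximum to be attained by $n_ik$ (else $\delta=a(\pi_i^{\min})-\max\{a(\pi_i^{\min}),\,n_ia(\chi\mu_i)\}\le 0$). In particular $n_i\mid a(\pi_i)$, which explains the congruence $j\equiv a(\pi_i)\Mod{n_i}$; and by part (3) of Lemma \ref{lem:interference}, the parameter $l:=a(\chi\mu_i)$ lies in $\{0,\ldots,k-1\}$. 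Rewriting $\delta_\chi(\pi_i)=n_ik-\max\{a(\pi_i^{\min}),\,n_il\}$, the prescribed value $\delta_\chi(\pi_i)=a(\pi_i)-j$ then pins down $l$ via the congruence and range conditions on $j$.

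Second, I would count by passing to the dual variable $\eta:=\chi\mu_i$. Since $\hat{\of}^{\times}\isom\Xf$ as abelian groups, the map $\chi\mapsto\eta$ is a bijection on $\Xf$; moreover, the ``ultrametric-like'' identity $a(\eta\mu_i^{-1})=\max\{a(\eta),\,a(\mu_i)\}$ valid whenever $a(\eta)\ne a(\mu_i)$ guarantees that every $\eta\in\Xf$ with $a(\eta)<k$ pulls back to a $\chi\in\Xf$ with $a(\chi)=k$ automatically. Hence the number of $\chi$ meeting the constraint of fixing $l$ is read off directly from Lemma \ref{lem:number-of-chars} applied to the appropriate $\Xf(\cdot)$, yielding \eqref{eq:quant-lemma-1}. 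For the extremal case \eqref{eq:quant-lemma-2}, the condition $\delta_\chi(\pi_i)=a(\pi_i)$ forces $a(\pi_i^{\min})=0$ and $a(\chi\mu_i)=0$, i.e.\ $\chi\mu_i$ is unramified on $\ofx$; restricting to $\chi\in\Xf(a(\pi_i)/n_i)$ and again using the bijection, the factor $(q-2)$ appears by isolating those $\eta$ achieving \emph{exactly} the extremal conductor $k$ from those of strictly smaller conductor, via Lemma \ref{lem:number-of-chars}.

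The main obstacle will be untangling the casework between the two regimes in the definition of $\delta$, namely when the max is attained by $a(\pi_i^{\min})$ versus by $n_ia(\chi\mu_i)$, and verifying that the congruence constraint on $j$ consolidates both regimes into a single clean count of the form $\#\Xf\bigl((a(\pi_i)-j)/n_i\bigr)$. In particular, keeping straight which $\eta$ correspond to the exact (rather than merely upper-bounded) value of $\delta_\chi(\pi_i)$ requires patient but elementary bookkeeping with the cardinality formulas of Lemma \ref{lem:number-of-chars}, together with careful use of the ultrametric identity for products of characters of distinct conductor.
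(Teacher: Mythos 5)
For \eqref{eq:quant-lemma-1} your route is the same as the paper's: the condition $\delta_\chi(\pi_i)=a(\pi_i)-j$ pins down $a(\chi\mu_i)$, the map $\chi\mapsto\eta=\chi\mu_i$ is a bijection of $\Xf$, and the ``ultrametric'' property $a(\eta\mu_i^{-1})=\max\{a(\eta),a(\mu_i)\}$ for $a(\eta)\neq a(\mu_i)$ shows that $a(\chi)=a(\mu_i)$ is automatic when $a(\eta)<a(\mu_i)$. The paper simply records this as the membership $\chi\in\mu_i^{-1}\Xf\bigl((a(\pi_i)-j)/n\bigr)$ and reads off the count; your write-up spells out the same steps in more detail.

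For \eqref{eq:quant-lemma-2} your argument has a genuine gap and is internally inconsistent. You first deduce from $\delta_\chi(\pi_i)=a(\pi_i)$ that $a(\pi_i^{\min})=0$ and $a(\chi\mu_i)=0$; but for fixed $\mu_i$ there is exactly one $\chi\in\Xf$ with $\chi\mu_i$ trivial on $\ofx$ (namely $\chi=\mu_i^{-1}$), so this gives a count of $1$, not $(q-2)\#\Xf\bigl(a(\pi_i)/n-1\bigr)$. You then pivot and say the factor $(q-2)$ comes from ``isolating the $\eta$ with exactly the extremal conductor $k$'' via Lemma \ref{lem:number-of-chars}; that yields $\#\Xf'(k)=q^{k-2}(q-1)^2$, which is again not $(q-2)\#\Xf(k-1)=q^{k-2}(q-1)(q-2)$, and it contradicts your own earlier claim that $a(\chi\mu_i)=0$. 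The paper's mechanism is different: it counts the $\chi$ lying in $\Xf(k)$ but in neither $\Xf(k-1)$ nor $\mu_i^{-1}\Xf(k-1)$, where $k=a(\pi_i)/n_i$. These two excluded subsets are \emph{disjoint} (a common element $\chi$ would have $a(\chi)<k$ and $a(\chi\mu_i)<k$, contradicting $a(\mu_i)=k$), so the count is $\#\Xf(k)-2\#\Xf(k-1)=q^{k-1}(q-1)-2q^{k-2}(q-1)=(q-2)\#\Xf(k-1)$. The key idea you are missing is precisely this two-set inclusion–exclusion inside $\Xf(k)$, which simultaneously enforces the constraint on $a(\chi)$ and the constraint on $a(\chi\mu_i)$; a one-variable count in $\eta$ alone does not capture both.
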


\begin{proof}

The number in \eqref{eq:quant-lemma-1} is determined by the necessity that
\begin{equation*}
\chi\in\mu^{-1}_{i} \Xf\bigg(\frac{a(\pi_{i})-j}{n}\bigg).
\end{equation*}
Similarly, we count upto the number in \eqref{eq:quant-lemma-2} by observing that $\chi\in \Xf(a(\pi_{i})/n_{i})$ but $\chi$ is not an element of $\Xf((a(\pi_{i})/n_{i})-1)$ nor $\mu_{i}^{-1}\Xf((a(\pi_{i})/n_{i})-1)$.

\end{proof}

\section*{Acknowledgement}

The author would like to express gratitude to the Max-Planck-Institut f\"ur Mathematik, Bonn for providing welcoming hospitality during a visit to the institute during which this work was completed.


\bibliographystyle{amsplain}			
\bibliography{bibliography-cond}				

\end{document}